\newcommand{\Q}{\mathbb{Q}}
\newcommand{\N}{\mathbb{N}}
\renewcommand{\phi}{\varphi}
\renewcommand{\epsilon}{\varepsilon}
\newtheorem{thm}{Theorem}
\newtheorem{lemma}[thm]{Lemma}
\newtheorem{prop}[thm]{Proposition}
\newtheorem{cor}[thm]{Corollary}
\theoremstyle{definition}
\theoremstyle{remark}
\newtheorem{rmk}{Remark}
\title{On the integral values of a curious recurrence}
\author[R. Dvornicich]{Roberto Dvornicich}
\address[R. Dvornicich]{Dipartimento di Matematica\\ Università di Pisa\\ Largo Pontecorvo 5\\ 56127 Pisa\\ Italia}
\email{dvornic@dm.unipi.it}
\author[F. Veneziano]{Francesco Veneziano}
\address[F. Veneziano]{Institut für Analysis und Computational Number Theory (Math A)\\ TU Graz\\ Steyrergasse 30/II\\ 8010 Graz\\ Österreich}
\email{veneziano@math.tugraz.at}
\author[U. Zannier]{Umberto Zannier}
\address[U. Zannier]{Scuola Normale Superiore\\ Piazza dei Cavalieri 7\\ 56126 Pisa\\ Italia}
\email{u.zannier@sns.it}
\begin{document}

\begin{abstract}
    We discuss a problem initially thought for the Mathematical Olympiad but which has several interpretations. The recurrence sequences involved in this problem may be generalized to recurrence sequences related to 
a much larger set of diophantine equations.
\end{abstract}

\subjclass[2010]{11B83; 11B99; 11D99.}

\maketitle

The purpose of this note is to comment on a problem shortlisted for the Romanian Mathematical 
Olympiad 2010 (see \cite[Shortlisted Problems for the $61^{th}$ NMO, No. 16]{RMC}). 


\smallskip

{\bf Problem} Let $x_0,x_1,x_2,\dots$ be the sequence defined by 
\begin{align}
 x_0&=1\notag\\
 \label{RicorrenzaXn}x_{n+1}&=1+\frac{n}{x_n}, &\forall n\geq 0.
\end{align}
What are the values of $n$ for which $x_n$ is an integer? 

\medskip


The author of this problem is Gheorghe  Iurea, but his solution does not appear in the above quoted booklet.

\medskip 

We have found this problem of interest, not only in itself, but also because {\it a posteriori} it may be dealt with in  different ways, each of which involves mathematical arguments of various nature. For instance, the second solution below uses a linear differential equation of the second order, which admits a solution which is a well-known function in combinatorics.

It may be that the arguments extend to cover a whole bunch of problems of a similar sort.

\bigskip

Let us now go back to the above sequence. Note that its first few values  are
\begin{align}\label{PrimiValori}
 1,1,2,2,\frac{5}{2},\frac{13}{5},\frac{38}{13},\frac{58}{19},\frac{191}{58},\frac{655}{191}\dotsc
\end{align}

\medskip

We can iterate the recursive formula \eqref{RicorrenzaXn}  to obtain $x_{n+2}=1+\frac{n+1}{1+\frac{n}{x_n}}=\frac{(n+2)x_n+n}{x_n+n}$;  in general,  $x_{n+k}$ may be expressed in two different ways: first, by a kind of continued fraction 
involving $x_n$ and the integers in $\{n,\ldots ,n+k-1\}$,    second,  by a linear fractional transformation in $x_n$, namely $x_{n+k}=M_{n,k}(x_n)=\frac{\alpha_{n,k}x_n+\beta_{n,k}}{\gamma_{n,k}x_n+\delta_{n,k}}$, for suitable integer coefficients depending on $n,k$; here $M_{n,k}$ is associated to the matrix 
$\begin{pmatrix}\alpha_{n,k} & \beta_{n,k}\\ \gamma_{n,k}&\delta_{n,k}\end{pmatrix}$.

These matrices  satisfy  the recurrence
\begin{equation*}
M_{n,k+1}=\begin{pmatrix}1 & n+k\\ 1 & 0\end{pmatrix}M_{n,k}.
\end{equation*}

The alluded continued fraction has not bounded length and is not periodic, and it seems not easy to find a simple formula for the general term $x_n$ or for the matrices $M_{n,k}$. Also, the standard tools using congruences do not seem to lead directly to a solution of  the stated question. Nevertheless we shall see a number of methods to answer it.

\bigskip

\section*{First solution}

We now present a solution which turns out to be essentially the same of Gheorghe's, which  he kindly sent us.

\medskip

If we define $f_n(x)=1+\frac{n}{x}$, we see that $x_{n+1}=f_n(x_n)$, so that one is led to study the dynamics of  the sequence of functions $f_n$; we note that in this solution the {\it arithmetic} comes into play only at the end, whereas one starts just by studying the dynamics from the real variable viewpoint (rather than a variable in $\Q$).

\medskip

Let us call $y_n=\frac{1+\sqrt{4n+1}}{2}$ the (positive) fixed point of $f_n$. Plainly we have that if $x<y_n$ then $f_n(x)>y_n$ and vice versa.

We can prove by induction that
\begin{lemma}
 For every $n\geq 4$ we have
\begin{equation}\label{DisugRadici}
 y_{n-1}=\frac{1+\sqrt{4n-3}}{2}<x_n<\frac{1+\sqrt{4n+1}}{2}=y_n.
\end{equation}
\end{lemma}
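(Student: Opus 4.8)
The plan is to prove the chain of inequalities $y_{n-1}<x_n<y_n$ by induction on $n$, exploiting the fact (noted just above the statement) that $f_n$ is monotone decreasing on the positive reals and flips the two sides of its fixed point $y_n$: if $0<x<y_n$ then $f_n(x)>y_n$, and if $x>y_n$ then $0<f_n(x)<y_n$. First I would check the base case $n=4$ directly from the list \eqref{PrimiValori}: $x_4=\frac52=2.5$, while $y_3=\frac{1+\sqrt{13}}{2}\approx 2.302$ and $y_4=\frac{1+\sqrt{17}}{2}\approx 2.561$, so \eqref{DisugRadici} holds. (One should also record the values $x_2=x_3=2$ to feed the induction step cleanly, or simply start the induction using $x_4$ and $x_5=\frac{13}{5}$.)

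For the inductive step, assume $y_{n-1}<x_n<y_n$; I want $y_n<x_{n+1}<y_{n+1}$. Since $x_{n+1}=f_n(x_n)$ and $x_n<y_n$, the sign-flip property immediately gives $x_{n+1}>y_n$, which is the left-hand inequality for $n+1$. For the right-hand inequality $x_{n+1}<y_{n+1}$ I would use monotonicity: from $x_n>y_{n-1}$ and $f_n$ decreasing, $x_{n+1}=f_n(x_n)<f_n(y_{n-1})$, so it suffices to prove the purely numerical inequality
\begin{equation*}
 f_n(y_{n-1}) \;=\; 1+\frac{n}{y_{n-1}} \;<\; y_{n+1}.
\end{equation*}
Writing $y_{n-1}=\frac{1+\sqrt{4n-3}}{2}$ and $y_{n+1}=\frac{1+\sqrt{4n+5}}{2}$, this is an inequality between explicit algebraic expressions in $n$; clearing denominators and rationalizing (note $1+\frac{n}{y_{n-1}}=1+\frac{2n}{1+\sqrt{4n-3}}=1+\frac{2n(\sqrt{4n-3}-1)}{4n-4}=1+\frac{n(\sqrt{4n-3}-1)}{2(n-1)}$) reduces it to checking that a certain expression is positive for all $n\geq 4$, which follows from elementary estimates of the square roots (e.g. $\sqrt{4n-3}<2\sqrt n$ and $\sqrt{4n+5}>2\sqrt n$, with a little care near the endpoints).

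The main obstacle is precisely this last numerical inequality $f_n(y_{n-1})<y_{n+1}$: the two relevant fixed points $y_{n-1}$ and $y_{n+1}$ straddle $y_n$ and the gap $y_{n+1}-y_{n-1}$ shrinks like $1/\sqrt n$, so one needs the comparison to be reasonably tight rather than a crude bound. I expect the cleanest route is to substitute $t=\sqrt{4n-3}$, rewrite everything as a rational inequality in $t$, and verify it holds for $t\geq\sqrt{13}$ (equivalently $n\geq4$) by factoring out the obvious positive terms; the constraint $n\geq 4$ enters exactly to keep this final polynomial positive. Once this is done, both inequalities for $n+1$ are established and the induction closes.
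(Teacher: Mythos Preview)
Your proposal is correct and follows essentially the same route as the paper: induction with base case $n=4$, the sign-flip of $f_n$ for the left inequality, and the reduction via $x_n>y_{n-1}$ (i.e., monotonicity of $f_n$) to the purely numerical estimate $1+\frac{n}{y_{n-1}}<y_{n+1}$, which the paper likewise leaves as ``an elementary, though tedious, computation.'' Your added suggestion of the substitution $t=\sqrt{4n-3}$ is a reasonable way to carry out that final check.
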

\begin{proof}
By a direct computation, we have  $\frac{1+\sqrt{13}}{2}<\frac{5}{2}<\frac{1+\sqrt{17}}{2}$, which establishes the basis of the induction. Assuming \eqref{DisugRadici} holds for $n$, by the previous remark we have that $y_n<x_{n+1}$, so we need only to prove that
\begin{equation*}
 1+\frac{n}{x_n}<y_{n+1}.
\end{equation*}
By the inductive hypothesis, it is enough to show that
\begin{align*}
 1+\frac{n}{y_{n-1}}&<y_{n+1}, \qquad i.e.,\\
1+\frac{2n}{1+\sqrt{4n-3}}&<\frac{1+\sqrt{4n+5}}{2},
\end{align*}
which is an elementary, though tedious, computation.
\end{proof}
\begin{rmk}
 For the  values of $n$ smaller than $4$, we have $x_3=y_2=x_2=2$ and $x_1=y_0=x_0=1$.
\end{rmk}

\medskip

Let us now assume that $x_n$ is an integer for some $n\geq 4$. From the lemma  we have
\begin{align*}
 \frac{1+\sqrt{4n-3}}{2}<&x_n<\frac{1+\sqrt{4n+1}}{2},\\
\sqrt{4n-3}<2&x_n-1<\sqrt{4n+1},\\
4n-3<(2&x_n-1)^2<4n+1.
\end{align*}
However the last inequalities are inconsistent modulo $4$.

Therefore we conclude that the only integral values of the sequence are $x_0,\dotsc,x_3$.

\bigskip

Essentially the same solution may be reached by a slightly different approach.

The same conclusion as before can be reached if we show  that $n-1<x_n^2-x_n<n$ for $n\ge 4$. 

We argue by induction. The inequalities are  verified by direct inspection for $n=4$, since $3< \frac{25}{4} - \frac{5}{2} <4$. 

Now let $a_n=x_n^2-x_n$, and assume that the inequalities hold  up to $n$. We may write $a_{n+1}$ as 
$$
a_{n+1} = x_{n+1}^2 -x_{n+1} = x_{n+1}(x_{n+1}-1) = \left(1 + \frac{n}{x_n}\right)\frac{n}{x_n} = \frac {n(x_n+n)}{x_n^2}. 
$$ 
By the induction hypothesis we have: 
$$ x_n^2<x_n+n \Longrightarrow a_{n+1} > n \frac {(x_n+n)}{x_n+n} = n $$
and 
$$ x_n^2> x_n+n-1 \Longrightarrow a_{n+1} < n \frac {x_n+n}{x_n+n-1} <n+1 $$
since $x_n>1$.

\section*{Second solution}
To study the sequence $(x_n)$ from an arithmetic point of view we define two integer sequences $(a_n),(b_n)$ by the recurrences 
\begin{align}
 a_0&=1\notag\\
 b_0&=1\notag\\
 \label{RicorrenzaAn}a_{n+1}&=a_n+nb_n,& &\forall n\geq 0\\
 \label{RicorrenzaBn}b_{n+1}&=a_n,& &\forall n\geq 0. 
\end{align}
Comparing \eqref{RicorrenzaAn} and \eqref{RicorrenzaBn} with \eqref{RicorrenzaXn} we see immediately that they satisfy $x_n=\frac{a_n}{b_n}$, so $a_n,b_n$ are the  numerator and denominator respectively in some fractional representation of $x_n$; however   $a_n,b_n$ {\it a priori} need not be coprime, so the said fraction can be possibly simplified.

We also see that $b_n$ may be eliminated from the recurrence to get
\begin{align}
 a_0&=1\notag\\
 a_1&=1\notag\\
 \label{RicorrenzaAA}a_{n+2}&=a_{n+1}+(n+1)a_n,  &\forall n\geq 0,
\end{align}
with $x_n=\frac{a_n}{a_{n-1}}$.

\medskip

Let us define $d_n=\gcd(a_n,a_{n-1})$; $d_n$ tells us how much the reduced denominator of $x_n$ differs from $a_{n-1}$. So, to obtain a lower bound for said denominator, we need a lower bound for $a_{n-1}$ and an upper bound for $d_n$.

\begin{rmk}\label{DDivide}
 By the recurrence \eqref{RicorrenzaAA} we see that $d_{n+1}|a_{n+2}$, and so $d_{n+1}|d_{n+2}$; this will be helpful  in establishing an upper bound for $d_n$.
\end{rmk}

\bigskip

A lower bound for $a_n$ is easily obtained 
as in the following lemma.
\begin{lemma}\label{L.lowerbound}
 For every $n\geq 0$ we have $a_n\geq\sqrt{n!}.$
\end{lemma}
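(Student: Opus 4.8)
The plan is a straightforward induction on $n$ based on the linear recurrence \eqref{RicorrenzaAA}, all of whose terms are positive integers. Since that recurrence has order two, I will first check the two base cases: $a_0 = a_1 = 1$, and $\sqrt{0!} = \sqrt{1!} = 1$, so the bound holds for $n = 0$ and $n = 1$. For the inductive step I assume $a_n \geq \sqrt{n!}$ and $a_{n+1} \geq \sqrt{(n+1)!}$ and deduce $a_{n+2} \geq \sqrt{(n+2)!}$.

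Using \eqref{RicorrenzaAA} and the inductive hypothesis,
\[
a_{n+2} = a_{n+1} + (n+1)a_n \;\geq\; \sqrt{(n+1)!} + (n+1)\sqrt{n!} \;=\; \sqrt{n!}\,\bigl(\sqrt{n+1} + (n+1)\bigr),
\]
so it is enough to verify $\sqrt{n+1} + (n+1) \geq \sqrt{(n+1)(n+2)} = \sqrt{(n+2)!/n!}$. Dividing by $\sqrt{n+1}$ reduces this to $1 + \sqrt{n+1} \geq \sqrt{n+2}$, which is clear after squaring since $(1+\sqrt{n+1})^2 = n+2+2\sqrt{n+1}$. This closes the induction.

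I do not expect any genuine difficulty here: the only points to watch are that the induction must carry along two consecutive values (to match the order of \eqref{RicorrenzaAA}) and the trivial estimate $1+\sqrt{n+1}\geq\sqrt{n+2}$. If one prefers to avoid even that small computation, an alternative is to observe that $(a_n)$ is non-decreasing — immediate from \eqref{RicorrenzaAA} and positivity — so that $a_{n+2} \geq a_n + (n+1)a_n = (n+2)a_n$; iterating this over even and over odd indices separately yields $a_{2m} \geq (2m)!!$ and $a_{2m+1} \geq (2m+1)!!$, and the lemma then follows from the elementary inequalities $((2m)!!)^2 \geq (2m)!!\,(2m-1)!! = (2m)!$ and $((2m+1)!!)^2 \geq (2m+1)!!\,(2m)!! = (2m+1)!$.
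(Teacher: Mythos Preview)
Your main argument is correct and is essentially identical to the paper's: both use the same two-step induction on \eqref{RicorrenzaAA}, and the inequality you isolate, $1+\sqrt{n+1}\geq\sqrt{n+2}$, is exactly the one the paper verifies (it just factors out $\sqrt{(n+2)!}$ instead of $\sqrt{n!}$). Your alternative via $a_{n+2}\geq(n+2)a_n$ and semifactorials is also fine and gives a pleasant variant, though it is not needed.
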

\begin{proof}
 We argue by induction on $n\ge 0$. We check that $a_0=1=\sqrt{0!}$, $a_1=1=\sqrt{1!}$, and assuming the bound for $a_n$ and $a_{n+1}$ we get
\begin{multline*}
 a_{n+2}=a_{n+1}+(n+1)a_n\geq \sqrt{(n+1)!}+(n+1)\sqrt{n!}=\\
=\sqrt{(n+2)!}\frac{1+\sqrt{n+1}}{\sqrt{n+2}}=\sqrt{(n+2)!}\sqrt{1+\frac{2\sqrt{n+1}}{n+2}}\geq \sqrt{(n+2)!}. \qedhere
\end{multline*}
\end{proof}

\bigskip

To get an upper bound for $d_n$ we introduce the exponential generating
function of the sequence $(a_n)_{n\in\N}$, namely 
\begin{equation*}
 F(x)=\sum_{n=0}^\infty \frac{a_n}{n!}x^n.
\end{equation*}

We consider $F(x)$ merely as a formal power series, although one could prove that it converges for every complex $x$.

\medskip

From the recurrence on $(a_n)$ we can  obtain a differential equation
for $F$; in fact, we can multiply \eqref{RicorrenzaAA} by
$\frac{x^n}{n!}$ and sum it for $n\geq 0$; since clearly 
$F'(x)=\sum_{n=0}^\infty \frac{a_{n+1}}{n!}x^n$ and $F''(x)=\sum_{n=0}^\infty \frac{a_{n+2}}{n!}x^n$, 
we obtain that $F$ satisfies the conditions  
\begin{equation}\label{DifferentialF}
\begin{cases}
 F(0)&=1\\
 F'(0)&=1\\
 F''(x)&=(x+1)F'(x)+F(x).
\end{cases}
\end{equation}
The Cauchy problem \eqref{DifferentialF} may be solved (in the ring of formal power series) to get 
\begin{equation*}
F(x)=e^{x+\frac{x^2}{2}},
\end{equation*}
 and we can use this explicit form to get a formula for $a_n$. In fact
\begin{align}
 \sum_{n=0}^\infty \frac{a_n}{n!}x^n&=e^{x+\frac{x^2}{2}}=e^x e^{\frac{x^2}{2}}=\sum_{m=0}^\infty \frac{x^m}{m!} \sum_{s=0}^\infty \frac{x^{2s}}{2^s s!}\notag\\
 \frac{a_n}{n!}&=\sum_{2s+m=n}\frac{1}{2^s s!}\frac{1}{m!}\notag\\
 \label{FormulaAN}a_n&=\sum_{2s\leq n}\frac{n!}{2^s s! (n-2s)!}=\sum_{2s\leq n}\binom{n}{2s}(2s-1)!!,
\end{align}
where the {\it semifactorial}  $(2s-1)!!$ denotes as usual  the product $(2s-1)\cdot (2s-3)\dotsm 3\cdot 1$ and is defined to be $1$ for $s=0$.

\bigskip

\begin{rmk}
We note that these explicit formulas enable us to improve on Lemma \ref{L.lowerbound}. Indeed, using Stirling's formula, one may deduce that the `correct' order of magnitude of $\frac{a_n}{\sqrt{n!}}$ is  roughly $\exp(\sqrt n)$. A corresponding upper bound may be also obtained directly by induction, using the recurrence for $a_n$.

\end{rmk}

\bigskip

We can now use the preceding formula to prove the following lemma.
\begin{lemma}\label{Congruent1}
 Let $p$ be and odd prime. If $p|n$, then $a_n\equiv 1\pmod p.$ 
\end{lemma}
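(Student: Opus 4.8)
\section*{Proof proposal}

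The plan is to work directly from the closed formula \eqref{FormulaAN}, namely $a_n=\sum_{2s\le n}\binom{n}{2s}(2s-1)!!$, and to reduce the sum modulo $p$ one term at a time. The guiding observation is that when $p\mid n$ almost all of the binomial coefficients occurring in \eqref{FormulaAN} are divisible by $p$, so that only a very short part of the sum is visible modulo $p$.

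The first step is to record the auxiliary fact that, if $p\mid n$, then $\binom{n}{k}\equiv 0\pmod p$ for every $k$ with $p\nmid k$. I would deduce this from Lucas' theorem: the last base-$p$ digit of $n$ is $0$ while that of $k$ is some $k_0$ with $0<k_0<p$, so the corresponding factor in the Lucas product is $\binom{0}{k_0}=0$. (Equivalently one can invoke Kummer's theorem, observing that adding $k$ and $n-k$ in base $p$ forces at least one carry in the units place.) Applying this with $k=2s$, and using that $p$ is odd so that $p\mid 2s$ if and only if $p\mid s$, we see that modulo $p$ the sum \eqref{FormulaAN} collapses to the terms with $p\mid s$.

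It then remains to treat those surviving terms. For $s=0$ we get $\binom{n}{0}(-1)!!=1$, by the convention $(2s-1)!!=1$ for $s=0$ recalled after \eqref{FormulaAN}. For $s=pt$ with $t\ge 1$ we have $s\ge p$, hence $2s-1\ge 2p-1>p$, so the odd number $p$ itself occurs among the factors of the semifactorial $(2s-1)!!=1\cdot 3\cdot 5\dotsm(2s-1)$; thus $p\mid (2s-1)!!$ and the whole term vanishes modulo $p$. Summing up, $a_n\equiv 1\pmod p$.

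I expect the only genuinely delicate point to be the vanishing of $\binom{n}{k}$ modulo $p$ for $p\nmid k$ when $k$ is allowed to be large, since the elementary identity $k\binom{n}{k}=n\binom{n-1}{k-1}$ only settles the range $k<p$; for the general case one really needs Lucas' or Kummer's theorem, which I would either cite or unwind into the base-$p$ digit computation sketched above. It is also worth emphasizing that the hypothesis that $p$ is odd enters precisely in passing from $p\mid 2s$ to $p\mid s$, and that the prime $2$ behaves differently.
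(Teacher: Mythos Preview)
Your argument is correct and follows the same overall plan as the paper---reducing the sum \eqref{FormulaAN} term by term modulo $p$---but the paper organises the case split more economically. Rather than partitioning according to whether $p\mid s$ (which forces you to invoke Lucas' theorem to dispose of the binomials with $p\nmid 2s$), the paper simply splits at the threshold $2s=p$: for $p<2s$ the odd prime $p$ already occurs among the factors of $(2s-1)!!$, so those terms vanish regardless of what the binomial does; only for $0<2s<p$ does one need $p\mid\binom{n}{2s}$, and there the elementary observation that $p\mid n$ while $p\nmid(2s)!$ suffices. Hence the point you flag as ``genuinely delicate''---the divisibility of $\binom{n}{2s}$ for large $2s$---never actually arises, and Lucas/Kummer can be dispensed with entirely.
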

\begin{proof}
 Let $p$ be an odd prime dividing $n$. \\
If $p<2s$, we have that $p|(2s-1)!!$, as $p$ itself is one of the factors in the defining product of $(2s-1)!!$.\\
If $0<2s<p$ the binomial $\binom{n}{2s}$ is divisible by $p$, as the $p$ factor in $n$ is not cancelled by $(2s)!$.\\
So we have that in  formula \eqref{FormulaAN} only the term with $s=0$ is not divisible by $p$, whence 
\begin{equation*}
 a_n=\sum_{2s\leq n}\binom{n}{2s}(2s-1)!!\equiv 1 {\pmod p}.   \qedhere
\end{equation*}
\end{proof}
Applying  this lemma we get the following property of  $d_n$.
\begin{cor}\label{Corollario}
 For every $n\geq 1$, $d_n$ is a power of $2$.
\end{cor}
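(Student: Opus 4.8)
The plan is to combine the divisibility chain of Remark~\ref{DDivide} with the congruence of Lemma~\ref{Congruent1} in order to exclude every odd prime factor of $d_n$. Concretely, I would argue by contradiction: suppose some odd prime $p$ divides $d_n$ for a given $n\ge 1$, and derive an absurdity.

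The first step is to propagate the divisibility by $p$ upward in the index. By Remark~\ref{DDivide} we have $d_m\mid d_{m+1}$ for all relevant $m$, so from the index $n$ onwards the $d_m$ form a divisibility chain; hence $d_n\mid d_m$ for every $m\ge n$. In particular $p\mid d_m$, and since $d_m\mid a_m$ by definition of $d_m=\gcd(a_m,a_{m-1})$, we get $p\mid a_m$ for every $m\ge n$.

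The second step is to pick a good value of $m$. Choose any multiple $m$ of $p$ with $m\ge n$ (for instance the least such one; it exists because $p\ge 1$). Then $p$ is an odd prime dividing $m$, so Lemma~\ref{Congruent1} gives $a_m\equiv 1\pmod p$, contradicting $p\mid a_m$ from the previous step.

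Hence no odd prime divides $d_n$; being a positive integer, $d_n$ must be a power of $2$ (possibly $2^0=1$), which is the assertion of the Corollary. I do not expect any genuine obstacle here: the only points needing a word of justification are that the chain $d_m\mid d_{m+1}$ is indeed available for all the indices we use and that a multiple $m\ge n$ of $p$ exists, both of which are immediate; the real content has already been packaged into Remark~\ref{DDivide} and Lemma~\ref{Congruent1}.
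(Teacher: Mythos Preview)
Your argument is correct and is essentially the same as the paper's: both use Remark~\ref{DDivide} to propagate an odd prime divisor $p$ of $d_n$ to all later $a_m$, and then invoke Lemma~\ref{Congruent1} at a multiple of $p$ to obtain the contradiction $a_m\equiv 1\pmod p$. The only cosmetic difference is that the paper names the specific multiple $m=pn$, whereas you take any multiple of $p$ at least $n$.
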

\begin{proof}
 If an odd prime $p$ divides $d_m$ for some $m\geq 1$, then, by Remark \ref{DDivide}, $p$ divides $d_n$ (and hence $a_n$) for all $n\geq m$, so also for $n=pm$. But this is not possible because $a_{pm}\equiv 1\pmod p$ by Lemma \ref{Congruent1}.
\end{proof}

We are now ready to prove an upper bound for $d_n$, which will follow  by using again
the exponential generating function $F(x)$.

\begin{prop}
For every $n\geq 1$ we have that $d_n\leq 2^{n-1}$.
\end{prop}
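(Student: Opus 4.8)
The plan is to exploit the linear structure of the recurrence \eqref{RicorrenzaAA}: I would pair the sequence $(a_n)$ with a second, linearly independent solution of the \emph{same} recurrence and form the associated discrete Wronskian (Casoratian). This Casoratian will turn out to be $\pm(n-1)!$, so that $d_n$ divides $(n-1)!$; combined with Corollary \ref{Corollario}, which says $d_n$ is a power of $2$, this reduces the proposition to the trivial estimate $v_2((n-1)!)\le n-1$.

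In detail, first I would introduce the companion sequence $(b_n)$ defined by $b_0=0$, $b_1=1$ and $b_{n+2}=b_{n+1}+(n+1)b_n$ for $n\ge 0$; by construction every $b_n$ is an integer. Put $C_n=a_{n+1}b_n-a_nb_{n+1}$. Substituting $a_{n+1}=a_n+na_{n-1}$ and $b_{n+1}=b_n+nb_{n-1}$ (both instances of \eqref{RicorrenzaAA}) into $C_n$, the terms $a_nb_n$ cancel and what remains reorganises as $-n(a_nb_{n-1}-a_{n-1}b_n)$, i.e. $C_n=-nC_{n-1}$ for $n\ge 1$; since $C_0=a_1b_0-a_0b_1=-1$, this gives $C_n=(-1)^{n+1}n!$. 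Equivalently, up to sign $C_n$ is the determinant of the product $\begin{pmatrix}1&n\\1&0\end{pmatrix}\cdots\begin{pmatrix}1&1\\1&0\end{pmatrix}$ of the matrices governing the recurrence, each factor contributing $-j$; and, keeping $F$ in the picture as suggested, the exponential generating function of $(b_n)$ is the reduction-of-order partner $F(x)\int_0^x F(t)^{-1}\,dt$ of $F$.

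Now $d_{n+1}=\gcd(a_{n+1},a_n)$ divides both $a_{n+1}b_n$ and $a_nb_{n+1}$, hence divides their difference $C_n=(-1)^{n+1}n!$; replacing $n+1$ by $n$ we obtain $d_n\mid (n-1)!$ for every $n\ge 1$. By Corollary \ref{Corollario} the integer $d_n$ is a power of $2$, so $v_2(d_n)\le v_2((n-1)!)=\sum_{k\ge 1}\big\lfloor (n-1)/2^k\big\rfloor\le \sum_{k\ge 1}(n-1)/2^k=n-1$, and therefore $d_n\le 2^{n-1}$ (in fact $d_n\le 2^{n-2}$ for $n\ge 2$, so the stated bound is comfortably crude).

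I do not anticipate any genuine obstacle. The only delicate points are bookkeeping: getting the sign and the initial value right in the recursion $C_n=-nC_{n-1}$, and observing that Corollary \ref{Corollario} is really needed here — the clean divisibility $d_n\mid (n-1)!$ is on its own far weaker than the claimed bound, and the estimate only becomes small after one passes to the $2$-primary part of $d_n$.
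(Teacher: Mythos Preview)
Your argument is correct. The Casoratian computation $C_n=-nC_{n-1}$, $C_0=-1$, and hence $d_n\mid (n-1)!$, goes through exactly as you describe; combined with Corollary~\ref{Corollario} and Legendre's formula $v_2((n-1)!)=(n-1)-s_2(n-1)\le n-1$ it yields the stated bound (and, as you note, even $d_n\le 2^{n-2}$ for $n\ge 2$).

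The paper proceeds differently: instead of introducing a second solution of the recurrence, it exploits the explicit generating function via the identity $F(x)F(-x)=e^{x^2}$. Comparing coefficients of $x^{2n}$ gives
\[
\sum_{m+r=2n}(-1)^r\binom{2n}{m}a_ma_r=\frac{(2n)!}{n!}=2^n(2n-1)!!,
\]
and since in every summand at least one index is $\ge n$, the divisibility $d_{n+1}\mid a_m$ for all $m\ge n$ (Remark~\ref{DDivide}) forces $d_{n+1}\mid 2^n(2n-1)!!$; together with Corollary~\ref{Corollario} this gives $d_{n+1}\le 2^n$. Thus both approaches lean on Corollary~\ref{Corollario} at the end, but the paper's route needs the closed form of $F$ and the chain $d_{n+1}\mid d_{n+2}\mid\cdots$, whereas yours is a purely recursive, generating-function-free argument that in fact delivers a marginally sharper exponent. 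One cosmetic point: the symbol $b_n$ is already used in the paper for the sequence $b_{n+1}=a_n$, so you should rename your companion solution (say $c_n$) to avoid a clash.
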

\begin{proof}  We have the following  identities concerning the above generating function $F(x)$:

\begin{equation*}
\left (\sum_{m=0}^\infty a_m \frac{x^m}{m!}\right)\left( \sum_{r=0}^\infty (-1)^r a_r\frac{x^r}{r!}\right)=F(x)F(-x)=e^{x^2}=\sum_{n=0}^\infty 
\frac{x^{2n}}{n!}.
\end{equation*}
Comparing the coefficients of $x^{2n}$ for any $n\geq 1$, we obtain
\begin{align}
\label{Potenzedi2}  \sum_{m+r=2n}(-1)^r\binom{2n}{m}a_ma_r= \frac{(2n)!}{n!}=2^n \cdot (2n-1)!!.
\end{align}
Now, as observed in Remark 3 above, $d_{n+1}$ divides any $a_m$ with $m\geq n$, so it divides the left-hand side of \eqref{Potenzedi2}, and we know from the Corollary \ref{Corollario} that it is a power of 2, therefore $d_{n+1}\leq 2^n$ for $n\geq 1$; of course $d_1=1= 2^0$.
\end{proof}

To get the conclusion, denote by $D_n$ the \emph{reduced} denominator of $x_n$. We have: 
\begin{equation*}
 D_n\geq \frac{a_{n-1}}{d_n}\geq\frac{\sqrt{(n-1)!}}{2^{n-1}}\qquad \forall n\geq 1.
\end{equation*}

It is easily seen that 
\begin{equation*}
 \sqrt{(n-1)!}>2^{n-1}\qquad \forall n\geq 10,
\end{equation*}
so we are left to inspect the values of $x_n$ with $0\leq n\leq 9$, which are exactly the values listed in \eqref{PrimiValori}.
\smallskip 

\begin{rmk} It is probably worth noting that the exponential
generating function $F(x)$ is widely known in the literature, and for
instance it can be interpreted as the exponential generating functions of
the number $a_n$ of involutions of the symmetric group $\mathcal{S}_n$ 
(see for instance \cite[Thm 3.16]{wilf}).
\end{rmk}

\section*{Further observations}
We can actually say much more about the numbers $d_n$.

\begin{prop}
If we define 
\begin{equation*}
 e_n=
\begin{cases}
 k &\text{if $n=4k$}\\
 k &\text{if $n=4k+1$}\\
 k+1 &\text{if $n=4k+2$}\\
 k+2 &\text{if $n=4k+3$,}
\end{cases}
\end{equation*}
then,  for every $n\geq 0$,  the exact power of $2$ dividing $a_n$ equals $2^{e_n}$.
\end{prop}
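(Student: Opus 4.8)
The statement is equivalent to the two facts that $v_2(a_{n+4})=v_2(a_n)+1$ for every $n\ge 0$ and that $v_2(a_0)=v_2(a_1)=0$, $v_2(a_2)=1$, $v_2(a_3)=2$ (here $v_2$ is the $2$-adic valuation). So the plan is to pass to a recurrence of step $4$ and run a strong induction on $n$. The reason one cannot argue directly from \eqref{RicorrenzaAA} is that for $n\equiv 0\pmod 4$ the two summands $a_{n+1}$ and $(n+1)a_n$ have the \emph{same} $2$-adic valuation, so cancellation may occur and $v_2(a_{n+2})$ is not determined by $v_2(a_{n+1})$ and $v_2(a_n)$; iterating the recurrence repairs this.

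Concretely, I would first deduce from \eqref{RicorrenzaAA}, by substituting $a_{n+1}=a_n+na_{n-1}$, the step-$2$ recurrence $a_{n+2}=(n+2)a_n+na_{n-1}$ for $n\ge 1$, and then, applying it twice, the step-$4$ recurrence
\begin{equation*}
a_{n+4}=(n+2)(n+5)\,a_n+2n(n+3)\,a_{n-1}\qquad(n\ge 1),
\end{equation*}
equivalently $a_m=(m-2)(m+1)\,a_{m-4}+2(m-4)(m-1)\,a_{m-5}$ for $m\ge 5$. Next one checks the base cases $a_0=a_1=1$, $a_2=2$, $a_3=4$, $a_4=10$, whose valuations $0,0,1,2,1$ match $e_0,\dots,e_4$.

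For the inductive step, fix $m\ge 5$, assume the result for all smaller indices, and split into the four residue classes of $m$ modulo $4$. In each class one computes the $2$-adic valuations of the coefficients $(m-2)(m+1)$ and $2(m-4)(m-1)$, and uses the inductive hypothesis to read off $v_2(a_{m-4})=e_{m-4}$ and $v_2(a_{m-5})=e_{m-5}$; a direct check then shows that the two terms on the right always have \emph{distinct} $2$-adic valuations, the smaller one being exactly $e_m$. For instance, for $m=4k$ the coefficient $(m-2)(m+1)=2(2k-1)(4k+1)$ has valuation $1$ and $e_{m-4}=k-1$, so the first term has valuation $k$, whereas the second has valuation $k+3+v_2(k-1)>k$, whence $v_2(a_{4k})=k=e_{4k}$. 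The classes $m\equiv 1,2,3\pmod 4$ are entirely analogous — in the last two the dominant term is the second one — and they also display where the extra power of $2$ originates, e.g.\ in the factor $(4k+2)$ of the relation $a_{4k+2}=(4k+2)a_{4k}+4k\,a_{4k-1}$.

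The only genuine point is the passage to the step-$4$ recurrence: with the original recurrence the two summands have equal valuation in half of the residue classes, so no purely valuation-theoretic argument could go through, while after iterating all the relevant valuations separate. What remains is the routine bookkeeping of the valuations of $(m-2)(m+1)$ and $2(m-4)(m-1)$ in each class and their comparison with the four-part definition of $e_n$, together with the harmless check that the indices $m-4$ and $m-5$ stay nonnegative, which is why the range $m\le 4$ is handled separately.
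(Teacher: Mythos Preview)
Your argument is correct, and the step-$4$ recurrence $a_{m}=(m-2)(m+1)a_{m-4}+2(m-4)(m-1)a_{m-5}$ together with the four-case valuation bookkeeping does the job cleanly.

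The paper takes a somewhat different route. Instead of your mixed-residue recurrence, it iterates one step further to obtain a relation linking $a_{n+6}$, $a_{n+2}$, $a_{n-2}$ (so all three indices lie in the \emph{same} residue class mod $4$):
\[
a_{n+6}=2(n^2+9n+19)\,a_{n+2}-n(n-1)(n+2)(n+5)\,a_{n-2}.
\]
Since $e_{n+6}=e_{n+2}+1=e_{n-2}+2$, dividing through by $2^{e_{n+6}}$ and setting $q_n=a_n/2^{e_n}$ yields
\[
q_{n+6}=(n^2+9n+19)\,q_{n+2}-\tfrac{n(n-1)(n+2)(n+5)}{4}\,q_{n-2},
\]
where the first coefficient is always odd and the second is always an even integer. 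Hence $q_{n+6}\equiv q_{n+2}\pmod 2$, and the induction becomes uniform: no case split on $n\bmod 4$ is needed, at the price of checking eight base values $q_0,\dots,q_7$ instead of five. Your approach buys a shorter recurrence and fewer base cases but pays with the four-way casework; the paper's approach pays with a longer elimination but obtains a single parity statement that settles all residues at once. Both hinge on the same underlying observation you identify --- that the original two-term recurrence has equal-valuation summands in certain classes, so one must iterate until the valuations separate.
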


\begin{proof}
  Letting $q_n:=a_n/2^{e_n}$, we are reduced to prove that $q_n$ is an
  odd integer for $n\ge 0$. We need the following lemma.

\begin{lemma}
For every $n\geq 2$ we have
\begin{equation*}
 a_{n+6}=2(n^2+9n+19)a_{n+2}-n(n-1)(n+2)(n+5)a_{n-2}.
\end{equation*}
\end{lemma}
\begin{proof}   Indeed, since the sequence $(a_n)$ verifies a linear recurrence of the {\it second} order, any {\it three} sequences of the shape $(a_n), (a_{n+r}), (a_{n+s})$ are   linearly related by an equation with coefficients which are polynomials in $n$; they may be found by easy elimination. Presently, we are interested in the case $r=4,s=8$, 
where  this elimination is hidden in  the following explicit calculations:

\begin{align*}
a_{n+6} = & \ a_{n+5}+(n+5)a_{n+4}=(n+6)a_{n+4}+(n+4)a_{n+3}\\
= &\ (2n+10)a_{n+3}+(n+6)(n+3)a_{n+2}\\
= &\ (n^2+11n+28)a_{n+2}+2(n+5)(n+2)a_{n+1}\\
= &\ 2(n^2+9n+19)a_{n+2}-(n+2)(n+5)a_{n+2}+2(n+2)(n+5)a_{n+1}\\
= &\ 2(n^2+9n+19)a_{n+2}+(n+2)(n+5)a_{n+1}-(n+2)(n+5)(n+1)a_{n}\\
= &\ 2(n^2+9n+19)a_{n+2}-n(n+2)(n+5)a_{n}+n(n+2)(n+5)a_{n-1}\\
= &\ 2(n^2+9n+19)a_{n+2}-n(n-1)(n+2)(n+5)a_{n-2}. \qedhere
\end{align*}\end{proof}

\medskip

\begin{rmk} It is worth noticing that the relation with {\it polynomial} coefficients that we have obtained is `monic', in the sense that  the coefficient of $a_{n+6}$   is $1$. This feature, which is for us important, is not {\it a priori}  guaranteed for a linear recurrence with polynomial coefficients, and appears to us as a piece of good luck.

\end{rmk}

\medskip

Having proved the lemma, 
if we divide the relation in this last lemma by $2^{e_{n+6}}$ we obtain the recurrence
\begin{equation*}
 q_{n+6}=(n^2+9n+19)q_{n+2}-\frac{n(n-1)(n+2)(n+5)}{4}q_{n-2}.
\end{equation*}
Observe that $n^2+9n+19$ is odd for every $n$, while $\frac{n(n-1)(n+2)(n+5)}{4}$ is an even integer for every $n$: indeed, for $n$ even (resp. $n$ odd), the product $n(n+2)$ (resp. $(n-1)(n+5)$) is divisible by $8$.

So, after checking by inspection that 
\begin{equation*}
 q_0=1,q_1=1,q_2=1,q_3=1,q_4=5,q_5=13,q_6=19,q_7=29
\end{equation*}
are all odd integers, we obtain by induction that the $q_n$ are all odd integers.
\end{proof}
\bigskip


In view  of the previous definitions and by Corollary \ref{Corollario}, we can now compute $d_n$ by a `closed' formula: 
\begin{prop}
\label{dn}
\begin{equation*}
 d_n=
\begin{cases}
 2^k &\text{if $n=4k$}\\
 2^k &\text{if $n=4k+1$}\\
 2^k &\text{if $n=4k+2$}\\
 2^{k+1} &\text{if $n=4k+3$.}
\end{cases}
\end{equation*}
\end{prop}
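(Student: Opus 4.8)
The plan is to combine the previous proposition, which computes the exact power of $2$ dividing $a_n$, with Corollary \ref{Corollario}, which tells us that $d_n$ is a power of $2$. Since $d_n=\gcd(a_n,a_{n-1})$ and $d_n$ is a power of $2$, we have $d_n=2^{\min(e_n,e_{n-1})}$, where $e_n$ is the function defined in the previous proposition. So the whole proof reduces to an elementary case analysis: compute $\min(e_n,e_{n-1})$ according to the residue of $n$ modulo $4$.

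Concretely, I would break into the four cases. If $n=4k$, then $e_n=k$ and $n-1=4(k-1)+3$ so $e_{n-1}=(k-1)+2=k+1$, giving $\min=k$. If $n=4k+1$, then $e_n=k$ and $n-1=4k$ so $e_{n-1}=k$, giving $\min=k$. If $n=4k+2$, then $e_n=k+1$ and $n-1=4k+1$ so $e_{n-1}=k$, giving $\min=k$. If $n=4k+3$, then $e_n=k+2$ and $n-1=4k+2$ so $e_{n-1}=k+1$, giving $\min=k+1$. These four values $k,k,k,k+1$ are exactly the exponents in the statement, so we are done. One should also note at the start that for $n\ge 1$ this is literally $d_n=\gcd(a_n,a_{n-1})$, and that the small cases ($k=0$, so $n=0,1,2,3$) are already covered by the explicit values of $e_n$ verified in the proof of the previous proposition (equivalently, $d_1=1$, $d_2=1$, $d_3=2$).

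There is no real obstacle here; the only thing to be careful about is the bookkeeping when $n-1$ crosses a multiple of $4$, i.e. that the parameter $k$ in the formula for $e_{n-1}$ is one less than the parameter for $e_n$ precisely in the case $n=4k$. Everything else is immediate once one accepts the two quoted results. I would present this as a short paragraph-length argument rather than a displayed computation, perhaps organizing the four cases in a compact inline list of the form ``$e_n$ and $e_{n-1}$ equal $(k,k+1)$, $(k,k)$, $(k+1,k)$, $(k+2,k+1)$ respectively'', and then reading off the minimum.
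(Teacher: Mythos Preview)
Your proposal is correct and is precisely the argument the paper has in mind: the authors simply state that the formula follows ``in view of the previous definitions and by Corollary~\ref{Corollario}'', i.e.\ from $d_n=2^{\min(e_n,e_{n-1})}$ together with the case analysis you spell out. Your write-up just makes explicit what the paper leaves to the reader.
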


As a final remark, we observe that Proposition \ref{dn} easily implies that:

\begin{enumerate}
\item [(a)] The estimate $d_n\leq 2^{\frac{n+1}{4}}$ holds for all $n>0$, which improves on the lower bound of the reduced denominator $D_n$ to
\begin{equation*}
  D_n\geq \frac{\sqrt{(n-1)!}}{2^\frac{n+1}{4}}\qquad \forall n\geq 1, 
\end{equation*}
and says directly that $D_n>1$ for $n\geq 4$; 

\item [(b)] the denominator $D_n \,(n>0)$ is even if and only if $n\equiv 0 \pmod 4$, while the corresponding numerator is even if and only if $n\equiv 2,3 \pmod 4$.
\end{enumerate}

\end{document}